\documentclass[reqno,12pt]{amsart}
\usepackage[utf8]{inputenc}
\usepackage[left=1.2in, right=1.2in, top=1in]{geometry}

\usepackage{amsmath, amssymb,amsthm, mathrsfs,color,times,textcomp,verbatim}
\usepackage{xcolor}
\usepackage[colorlinks=true]{hyperref}
\hypersetup{urlcolor=blue, citecolor=red, linkcolor=blue}
\usepackage[square,numbers]{natbib}
\usepackage{pgfplots}
\usepackage{tikz}
\usepackage{fancyhdr}
\usepackage{lipsum}
\numberwithin{equation}{section}
\theoremstyle{plain}
\newtheorem{theorem}{Theorem}[section]

\newtheorem{lemma}[theorem]{Lemma}

\usepackage{graphicx}

\theoremstyle{definition}

\def\R{\mathbb R}

\def\O{\Omega}
\def\p{\partial}

\def\a{\alpha}

\def\s{\sigma}

\def\non{\nonumber}
\def\rz{\mathbb{R}}

\newtheorem{remark}[theorem]{Remark}

\allowdisplaybreaks[4]

\title[Liouville Theorem for $(p,q)$-Laplace Equations]{Liouville Theorem for $(p,q)$-Laplace Equations in the Subcritical Case}
\begin{document}
\author{Yang Zhou}
\address{School of Mathematical Sciences, University of Science and Technology of China, Hefei, Anhui Province, P. R. China, 230026}
\email{zy19700816@mail.ustc.edu.cn}
\author{Hua Zhu}
\address{School of Mathematics and Physics, Southwest University of Science and Technology, Mianyang 621010, SiChuan PROVINCE, P. R. China}
\email{zhuhmaths@mail.ustc.edu.cn}
\maketitle

\begin{abstract}
    We employ the vector field method to establish a Liouville-type theorem for a class of \((p,q)\)-Laplace equations in the Euclidean space \(\mathbb{R}^n\). By modifying the exponents in the differential identity, we prove nonexistence in the subcritical range \(p-1<\alpha<q^*-1\), where \(q^*=nq/(n-q)\). The approach relies on constructing a suitable differential identity, carrying out precise integral estimates with cutoff functions, and combining sign control and decay of the cutoff errors.

	\vskip0.3cm
	\noindent{\bfseries Keywords:}{ Liouville theorem, $(p,q)$-Laplace operator, double phase, vector field method.}\\
\end{abstract}
\section{Introduction}
Liouville-type theorems are a central topic in the research of partial differential equations, as they address the fundamental question of the nonexistence of nontrivial solutions to nonlinear elliptic equations. Such results play an indispensable role in the analysis of a priori estimates, existence theory, and symmetry properties of solutions. Over the past several decades, Liouville theorems have been extensively investigated across all these settings, leading to the development of powerful tools such as the method of moving planes, the moving spheres technique and vector field methods.

For the subcritical semilinear elliptic equation
\begin{equation}\label{eq:elliptic}
	\Delta u + u^q = 0 \quad \text{in } \mathbb{R}^n.
\end{equation}
Gidas–Spruck \cite{GS81} established the Obata-type identity \cite{Obata71} via the method of vector fields and showed that equation \eqref{eq:elliptic} has no positive solutions when \(1 < q < 2^*-1\), where $ 2^* = \frac{2n}{n-2}. $

Serrin--Zou \cite{Serrin02} extended this result to the following $p$-Laplacian equation
\begin{equation}\label{eq:1.2}
	\Delta_p u + u^q = 0 \quad \text{in } \mathbb{R}^n.
\end{equation}
They showed similarly that equation \eqref{eq:1.2} admits no positive solutions when $p-1 < q < p^*-1, $ where $ p^* = \frac{np}{n-p}$ denotes the Sobolev critical exponent. A new proof using invariant tensor methods was later given by Zhu \cite{Zhu24}, who also relaxed the condition to $q < p^*-1 $. Based on the invariant tensor technique, Liang-Yan-Wu \cite{LWY2025} obtained the Liouville-type theorem for the anisotropic subcritical $ p $-Laplacian equation in the Euclidean space by means of the vector field method.

The critical case of equation \eqref{eq:elliptic}, namely \(q=2^*-1\), is closely related to the Yamabe problem in geometry. In this case, positive solutions are explicitly given by the Aubin–Talenti bubbles 
 \begin{equation}\label{1}
 	u(x)=\bigg(\frac{\lambda \sqrt{n(n-2)}}{\lambda^2+|x-x_0|^2}\bigg)^{\frac{n-2}{2}},~~ \lambda>0,~~ x_0\in \mathbb{R}^{n}.
 \end{equation}
Under the additional hypothesis $u(x)=O(|x|^{2-n})$, Gidas-Ni-Nirenberg \cite{GNN81} used the moving planes technique to show that the solution of \eqref{eq:elliptic} must be of the form \eqref{1}. Caffarelli-Gidas-Spruck \cite{CGS89} also studied equation \eqref{eq:elliptic} for $ q=2^*-1 $. Without assuming that the solution has appropriate decay estimates, they used the Kelvin transform and the method of moving spheres to show that the solution must be of the form \eqref{1}. Chen-Li \cite{CL91} gave a concise new proof in 1991. In 2003, Chang-Gursky-Yang \cite{CGY03} used a vector field method to show that for the critical case of equation \eqref{eq:elliptic}, any positive solution must be of the form \eqref{1} without any additional conditions when $ n=3 $, and under an integral boundedness condition when $ n\geqslant 4 $. \\

 As for the critical case $q=p^*-1$, under the assumption of finite energy
 \begin{equation}
     u \in D^{1,p}(\mathbb{R}^n) :=\{u\in L^{p^*}(\mathbb{R}^n), Du \in L^p(\mathbb{R}^n) \}.
 \end{equation}
 Many scholars have used asymptotic analysis and the moving plane method to prove that the positive solutions of equation \eqref{eq:1.2} must be of the following form \eqref{form-2}. For example, Damascelli-Merchan-Montoro-Sciunzi \cite{DMMS14} established this result for $\frac{2n}{n+2} < p < 2$, Vétois \cite{Vetois} for $1 < p < 2$, and Sciunzi \cite{Sciunzi} for $2 < p < n$.
 \begin{equation}\label{form-2}
     u(x) = \bigg(\frac{\lambda^{\frac{1}{p-1}}n^{\frac{1}{p}}(\frac{n-p}{p-1})^\frac{p-1}{p}}{|x-x_0|^\frac{p}{p-1}+\lambda^\frac{p}{p-1}}\bigg)^\frac{n-p}{p} ,~~ \lambda>0,~~ x_0\in \mathbb{R}^{n}.
 \end{equation}
It should be noted that the moving-plane method strongly relies on the symmetries of both the domain and the equation, and therefore it is not well suited to anisotropic settings. Relying on the asymptotic estimates from \cite{Vetois,Sciunzi}, Ciraolo-Figalli-Roncoroni \cite{CFR20} employed a vector field approach to classify positive solutions of the critical $ p $-Laplacian equation in the anisotropic setting. The classification problem without finite-energy assumptions remains an important research topic, and partial results \cite{CMR23} \cite{Ou25} \cite{Vé24} have been obtained.

Liouville theorems and the classification of solutions to PDEs in half-spaces have also been systematically studied; see, e.g., \cite{CCFS,E90,Li03,YuZhou25,Zhou24} and the references therein.

In this paper, we will study  the following $ (p,q) $-Laplace equation:
	\begin{equation}\label{pde1}
\Delta_p u+\Delta_q u + u^\alpha =0 , u\geqslant0,  ~~\text{in} ~~\mathbb{R}^{n},
\end{equation}
where $1< q<p$, $\Delta_p u = \operatorname{div}(|\nabla u|^{p-2}\nabla u)$ and $\Delta_q u = \operatorname{div}(|\nabla u|^{q-2}\nabla u)$ are the $p$-Laplacian and $q$-Laplacian, respectively, which together form the $(p,q)$-Laplace operator of unbalanced growth.

The $(p,q)$-Laplace operator is a special case of \textbf{double-phase differential operators}, originally introduced by Zhikov \cite{Z1995} to model the mechanical behavior of strongly anisotropic materials. Furthermore, the $(p,q)$-Laplace operator also appears as a special case of multi-phase differential operators in the Born-Infeld equation (after Taylor expansion) from electromagnetism, electrostatics, and electrodynamics. This class of operators is also used to model physical phenomena such as non-Newtonian fluids and composites composed of materials with different hardening exponents.
There have been a number of papers focusing on Liouville-type theorems for equations with the $(p,q)$-Laplace operator as the principal operator; see References \cite{BBF26,WZ26,Zha26} and the references therein.

This paper aims to establish a Liouville theorem of equation \eqref{pde1} in the subcritical case (see Remark \ref{r2}). We now state our main result.

\begin{theorem}\label{Them1}
	If $1<q<p$, $q<n$, and
	\[
		p-1<\alpha<q^*-1=\frac{n(q-1)+q}{n-q},
		\qquad q^*=\frac{nq}{n-q},
	\]
	then there exists no positive weak solution of \eqref{pde1} in $\mathbb R^n$.
\end{theorem}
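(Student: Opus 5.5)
We follow the Gidas--Spruck / Serrin--Zou vector field strategy, adapted to the two-phase operator. Write $a(t)=t^{p-2}+t^{q-2}$, so that \eqref{pde1} reads $\operatorname{div}(a(|\nabla u|)\nabla u)+u^\alpha=0$. By regularity theory for $(p,q)$-growth (Lieberman-type $C^{1,\beta}_{loc}$ estimates), a positive weak solution is $C^{1,\beta}_{loc}$ and $C^{2}$ away from critical points. Moreover, $a(|\nabla u|)\nabla u\in W^{1,2}_{loc}$, which justifies the differential identities below in integrated form; we would regularize with $a_\varepsilon(t)=(\varepsilon+t^2)^{(p-2)/2}+(\varepsilon+t^2)^{(q-2)/2}$ and pass to the limit if needed.

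\textbf{Change of variables and identity.} Set $u=v^{-\beta}$ (or $u=v^{\gamma}$) with a free exponent, and introduce the stress-type field
\[
E_{ij}=\partial_j\bigl(a(|\nabla u|)\partial_i u\bigr)-\tfrac1n\,\delta_{ij}\,\Delta_{a}u,
\]
together with the vector field
\[
W_j=u^{m}|\nabla u|^{s}\,E_{ij}\partial_i u .
\]
Here $m,s$ are free exponents, and we allow separate weights for the $p$- and $q$-parts. This mirrors the ``modified exponents'' mentioned in the abstract.

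Computing $\operatorname{div}W$ along the lines of Serrin--Zou, we aim for an identity of the form
\[
\operatorname{div}W \ \ge\ c\,u^{m}|\nabla u|^{s}\,a(|\nabla u|)\,|E|^2 + (\text{terms with a good sign}) .
\]
The Newton-type inequality $\operatorname{tr}(E^2)\ge0$ and the Cauchy--Schwarz step needed for the cross term $E(\nabla u,\nabla u)$ force conditions on the exponents. In the pure $q$-case these conditions are solvable exactly when $\alpha<q^*-1$, and in the pure $p$-case exactly when $\alpha<p^*-1$; since $q^*<p^*$, the binding constraint comes from the $q$-phase. The step we expect to be the main obstacle is the mixed terms: the derivative of $a$ produces $(p-2)t^{p-2}+(q-2)t^{q-2}$, which is not a multiple of $a(t)$. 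We plan to handle this by treating the matrix $A=a\,I+a'\,t^{-1}\nabla u\otimes\nabla u$ as having eigenvalue ratio in $[q-1,p-1]$. We then choose the exponents so that every coefficient is positive for all ratios in this interval. Convexity/linearity in the ratio reduces the check to the two endpoints $q-1$ and $p-1$, i.e.\ to the two pure cases. The condition $\alpha>p-1$ is used in this step to keep the sign of the $u^{\alpha}$-related term correct for both phases.

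\textbf{Cutoff estimates.} Multiply the inequality by $\eta^{k}$, where $\eta$ is a cutoff equal to $1$ on $B_R$ and supported in $B_{2R}$, and integrate by parts:
\[
\int \eta^k u^m|\nabla u|^s a|E|^2\le C\int \eta^{k-1}|\nabla\eta|\,u^m|\nabla u|^{s+1}a|E| .
\]
By Young's inequality the right side is absorbed, leaving
\[
\int \eta^{k-2}|\nabla\eta|^2 u^m|\nabla u|^{s+2}a .
\]
To show this error is $o(1)$ as $R\to\infty$, we use a priori integral estimates obtained by testing \eqref{pde1} with $u^{-d}\eta^k$ and $u^{d}\eta^k$. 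These give
\[
\int_{B_R}u^{\alpha-d}+\int_{B_R}a|\nabla u|^2u^{-d-1}\le CR^{n-\frac{\text{something}}{}}
\]
bounds, with separate scaling for the two phases. The lower-order $q$-phase dominates at infinity, since the decay rate is governed by $R^{n-q\cdot(\alpha-\cdot)/(\alpha-q+1)}$-type exponents; the upper bound $\alpha<q^*-1$ ensures the total power of $R$ is negative. Hölder interpolation between these estimates, applied separately to the $t^{p}$ and $t^{q}$ pieces, bounds the error by $CR^{-\epsilon}$. Hence $E\equiv0$ on $\{\nabla u\neq0\}$ and all good-sign terms vanish.

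\textbf{Conclusion.} From the vanishing of the good-sign terms we expect to get $\nabla u\equiv0$, or $u^{\alpha}\cdot(\ldots)=0$ with a positive coefficient in the subcritical range. Either way $u$ is constant, and then \eqref{pde1} gives $u^\alpha=0$, contradicting positivity.
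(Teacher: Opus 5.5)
There is a genuine gap, and it sits exactly at the step you flag as the main obstacle. Your reduction (check the endpoints $q-1$ and $p-1$; each pure case is solvable; since $q^*<p^*$ the $q$-phase is binding) does not follow.

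The parameters in the vector field are fixed once, independently of $|\nabla u|$. So you need a \emph{common} solution of the $q$-endpoint and $p$-endpoint inequalities. Separate solvability of the two pure problems says nothing about this: near the $q$-critical threshold the admissible parameters are pinned down by the $q$-endpoint and need not satisfy the $p$-endpoint conditions.

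This is a real obstruction, not a hypothetical one. Use the natural energy $x=|\nabla u|^p+|\nabla u|^q$ in place of the paper's $X_q$. The cross term then carries the coefficient $\beta+\lambda h(s)$, with $h(s)=\frac{ps^p+qs^q}{(p-1)s^p+(q-1)s^q}\in[\frac{p}{p-1},\frac{q}{q-1}]$.
\begin{itemize}
\item The square $-\frac{n-1}{4n}(\beta+\lambda h)^2$ is concave in $h$, so the endpoint reduction is legitimate for it.
\item With $\lambda=-L$ and $\mu=-\frac{n-1}{n}$, the coefficient of $u^{\alpha-1}x$ is $-\frac{n-1}{n}\alpha+L(1+\frac{h}{n})$. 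This is worst at the $p$-endpoint and forces $L>\frac{(n-1)\alpha}{n+p/(p-1)}$.
\item Positivity of the $x^2/u^2$ coefficient at the $q$-endpoint forces $L<\frac{(q-1)(n-1)}{n-q}$, whatever $\beta$ is.
\end{itemize}
Together these give only $\alpha<\frac{(q-1)(n+\frac{p}{p-1})}{n-q}$, strictly below $q^*-1$. Your sketch leaves the ``separate weights for the $p$- and $q$-parts'' unspecified and gives no mechanism to avoid this loss.

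The missing idea is the paper's $q$-phase weighted energy $X_q(s)=s^q+\frac{q(p-1)}{p(q-1)}s^p$. It is designed so that $sX_q'(s)/\bigl((p-1)s^p+(q-1)s^q\bigr)\equiv\frac{q}{q-1}$.

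Putting $\lambda u^{\beta-1}X_qY^i$ into the vector field makes the cross coefficient the constant $\beta+h_q\lambda$. The only remaining $|\nabla u|$-dependence is through $r=X_q/x\ge 1$. It enters $Q_q=\lambda(\beta-1)r-\frac{n-1}{4n}(\beta+h_q\lambda)^2$ and $A_q=\mu(\beta+\alpha)+\frac{n-1}{n}\beta-\lambda(r+\frac{h_q}{n})$ monotonically and with a favorable sign once $\lambda=-L<0$ and $\beta\le 1$. The sign conditions therefore collapse exactly to the pure $q$-phase ones, which are compatible if and only if $\alpha<q^*-1$.

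Two smaller points:
\begin{itemize}
\item The condition $\alpha>p-1$ plays no role in the sign control. It is needed in the cutoff step: Young's inequality requires $\alpha-(m-1)>0$ for $m=p$, and it also gives $E_p<E_q$.
\item When $q>\frac{2n}{n+1}$ the weight exponent $\beta$ is negative. Then the error $R^{-2}\int u^\beta|\nabla u|^{2(m-1)}$ with $\beta+2(m-1)<0$ cannot be absorbed by Young's inequality alone. The paper handles it with the lower bound $u\ge C|x|^{-\frac{n-q}{q-1}}$ of Lemma \ref{lemma1}, which your plan never supplies.
\end{itemize}
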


\begin{remark}
	The upper bound in Theorem \ref{Them1} improves the Serrin's index range obtained in \cite{BBF26}, namely
	\[
		\alpha<q_*-1,\quad q_*:=\frac{q(n-1)}{n-q}.
	\]
	 
\end{remark}

\begin{remark}\label{r2}
Let $\mathcal{H}:\Omega \times [0,\infty)\to [0,\infty)$ be defined by
\[
(x,t)\mapsto t^p+t^q.
\]
The Luxemburg norm is given by
\[
\|u\|_{\mathcal{H}}
=
\inf
\left\{
\tau>0:\rho_{\mathcal{H}}\left(\frac{u}{\tau}\right):=
\int_{\Omega}\mathcal{H}(x,|u|/\tau)\,dx\leq 1
\right\}.
\]

    The range of exponents in Theorem \ref{Them1} corresponds to the critical exponent in the following Sobolev-type inequality (\cite{CS16}): if $\O\subset \rz^n$ is bounded, then
    $$\|v\|_{L^{t}(\O)}\le C\|\nabla v\|_{\mathcal{H}(\O)}$$
    for any $v\in C_c^\infty(\O)$ and $t\in [1,q^*]$.
\end{remark}

The remainder of the paper is organized as follows. In Section 2, we introduce the notation and preliminary lemmas needed in the proof. Section 3 is devoted to the proof of Theorem \ref{Them1}.

\section{Preliminaries}

Throughout this paper, we use \(B_{r}(x)\) to denote the Euclidean ball in \(\mathbb{R}^{n}\) centered at $ x $ with radius $ r $. For the sake of brevity, we write \(B_{r}=B_{r}(0)\) when the center is at the origin.

To establish the main result of this paper, we recall two critical lemmas from existing literature, which play essential roles in the subsequent analysis.

\begin{lemma}[Lemma 4.5,\cite{AKM18}]\label{lemma:3.5}
	Let the matrix \( A \) be symmetric with positive eigenvalues and let \( \lambda_{\min} \) and \( \lambda_{\max} \) be its smallest and largest eigenvalue, respectively; let \( B \) be a symmetric matrix, then
	\[
	\text{trace}(AB(AB)^T) \leq n \left( \frac{\lambda_{\max}}{\lambda_{\min}} \right)^2 \text{trace}((AB)^2).
	\]
	In particular, \(\text{trace}((AB)^2)\geq0\).
\end{lemma}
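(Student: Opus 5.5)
Since the statement is purely linear-algebraic, the plan is to symmetrize $AB$ by a similarity transformation built from $A^{1/2}$, and then compare Frobenius norms. Because $A$ is symmetric with positive eigenvalues, it has a symmetric positive definite square root $A^{1/2}$. Its eigenvalues lie in $[\lambda_{\min}^{1/2},\lambda_{\max}^{1/2}]$, and those of $A^{-1/2}$ lie in $[\lambda_{\max}^{-1/2},\lambda_{\min}^{-1/2}]$. I would introduce
\[
C:=A^{1/2}BA^{1/2},
\]
which is symmetric because $B$ is. It satisfies
\[
AB=A^{1/2}\,C\,A^{-1/2}.
\]

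First I prove the ``in particular'' statement. Since $AB$ is similar to $C$, the matrix $(AB)^2$ is similar to $C^2$. Hence
\[
\text{trace}((AB)^2)=\text{trace}(C^2)=\text{trace}(CC^T)=|C|_F^2\ge 0,
\]
where $|\cdot|_F$ denotes the Frobenius norm.

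Next I treat the left-hand side. We have $\text{trace}(AB(AB)^T)=|AB|_F^2$. Using the standard inequality $|PQR|_F\le \|P\|_{op}|Q|_F\|R\|_{op}$ on the factorization above gives
\[
|AB|_F\le \|A^{1/2}\|_{op}\,|C|_F\,\|A^{-1/2}\|_{op}=\Big(\frac{\lambda_{\max}}{\lambda_{\min}}\Big)^{1/2}|C|_F .
\]
Squaring, we get
\[
\text{trace}(AB(AB)^T)\le \frac{\lambda_{\max}}{\lambda_{\min}}\,\text{trace}((AB)^2).
\]

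This is in fact stronger than claimed. Since $\lambda_{\max}/\lambda_{\min}\ge 1$, $n\ge 1$, and $\text{trace}((AB)^2)\ge0$, we have
\[
\frac{\lambda_{\max}}{\lambda_{\min}}\,\text{trace}((AB)^2)\le n\Big(\frac{\lambda_{\max}}{\lambda_{\min}}\Big)^2\text{trace}((AB)^2),
\]
which yields the stated bound.

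There is no real obstacle in this argument. The only points needing care are the identity $\text{trace}((AB)^2)=|C|_F^2$, which relies on $C$ being symmetric, and the operator-norm bound on a product in Frobenius norm. The latter follows from $|PQ|_F\le \|P\|_{op}|Q|_F$, proved column by column, together with its transpose version $|QR|_F\le |Q|_F\|R\|_{op}$.
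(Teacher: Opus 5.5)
Your proof is correct, and it actually yields the stronger, dimension-free bound $\operatorname{trace}(AB(AB)^T)\le(\lambda_{\max}/\lambda_{\min})\operatorname{trace}((AB)^2)$.

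The paper gives no proof of this lemma; it simply quotes it from the literature, so there is no in-paper argument to match. The constant $n(\lambda_{\max}/\lambda_{\min})^2$ is what comes out of the more naive route, which compares both sides with $|B|^2$. Work in an eigenbasis where $A=\operatorname{diag}(a_1,\dots,a_n)$; the orthogonal change of variables keeps $B$ symmetric and leaves both traces unchanged. Then $\operatorname{trace}((AB)^2)=\sum_{i,j}a_ia_jb_{ij}^2\ge\lambda_{\min}^2|B|^2$. On the other side, $|AB|^2\le|A|^2|B|^2\le n\lambda_{\max}^2|B|^2$, and the factor $n$ enters through the Frobenius bound $|A|\le\sqrt n\,\lambda_{\max}$.

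Your similarity $AB=A^{1/2}CA^{-1/2}$ with $C=A^{1/2}BA^{1/2}$ instead compares both sides with $|C|^2$, which \emph{equals} $\operatorname{trace}((AB)^2)$. So only one of your two steps loses anything. Using the operator norms of $A^{\pm1/2}$ rather than the Frobenius norm of $A$ is what removes the $n$.

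The eigenbasis computation has the small advantage of exhibiting $\operatorname{trace}((AB)^2)$ directly as a positively weighted sum of squares, with no square roots. Your argument gives the sharper constant.

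For the paper's application in \eqref{eq3.5}, either version suffices. There the eigenvalues of $A$ are $(p-1)s^{p-2}+(q-1)s^{q-2}$ and $s^{p-2}+s^{q-2}$, and their ratio lies in $[q-1,p-1]$. Hence $\lambda_{\max}/\lambda_{\min}\le\max\{p-1,1/(q-1)\}$, and your version gives $c=\min\{1/(p-1),\,q-1\}$ with no dependence on $n$.
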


\begin{lemma}[Lemma 1.2,\cite{BBF26}]\label{lemma1}
	Suppose \(\{|x|>R>0\}\subset\Omega\). Let \(u\) be a positive weak solution of the inequality
	\[
	\Delta_p u+\Delta_q u \le 0, \quad x\in\Omega.
	\]
	If \(1<q<n\), there exists a positive constant \(C=C(p,q,n,u,R)\) such that
	\[u(x)\geq C|x|^{-\frac{n-q}{q-1}}.\]
	If \(q\ge n\), then\[\liminf_{|x|\to\infty}u(x)>0.\]
\end{lemma}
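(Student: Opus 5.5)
The plan is to prove the lemma by comparison with explicit radial barriers, working on annuli $R_1<|x|<R_2$ with $R_1>R$ fixed and then letting $R_2\to\infty$. Write $\mathcal A(\xi)=|\xi|^{p-2}\xi+|\xi|^{q-2}\xi$. This field is strictly monotone, $(\mathcal A(\xi)-\mathcal A(\eta))\cdot(\xi-\eta)>0$ for $\xi\neq\eta$, so the weak comparison principle holds for $-\operatorname{div}\mathcal A(\nabla\cdot)$ on bounded domains. Concretely, suppose $w$ is a weak subsolution ($\Delta_pw+\Delta_qw\ge0$), $u$ a weak supersolution, and $w\le u$ on the boundary of the annulus. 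Testing both inequalities with $(w-u)^+\in W^{1,p}_0$ and subtracting gives
\[
\int(\mathcal A(\nabla w)-\mathcal A(\nabla u))\cdot\nabla(w-u)^+\le0 .
\]
Hence $(w-u)^+=0$, i.e. $w\le u$ in the annulus.

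\textbf{Positive lower bound on the inner sphere.} To start the comparison I need $m:=\inf_{|x|=R_1}u>0$. I would get this from the weak Harnack inequality for nonnegative supersolutions of operators with $(p,q)$-growth (Lieberman-type growth, valid since $\mathcal A$ is of Orlicz type with exponents between $q$ and $p$). This inequality gives $\inf_K u>0$ on compact subsets $K$ of $\{|x|>R\}$, using the lower semicontinuous representative of $u$. This is the step I expect to be the main technical obstacle, since it needs regularity theory for the unbalanced operator. In the isotropic, autonomous setting used here, however, it is standard.

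\textbf{Case $1<q<n$.} Let $\gamma=\frac{n-q}{q-1}$ and $w=\varepsilon\,(r^{-\gamma}-R_2^{-\gamma})$ with $r=|x|$.
\begin{itemize}
\item The $q$-term vanishes: $r^{-\gamma}$ is $q$-harmonic, so $\Delta_q w=0$.
\item The $p$-term is nonnegative. With $w'=-\varepsilon\gamma r^{-\gamma-1}$,
\[
\Delta_p w=r^{1-n}\big(r^{n-1}|w'|^{p-2}w'\big)'=-(\varepsilon\gamma)^{p-1}\big(n-1-(\gamma+1)(p-1)\big)\,r^{-(\gamma+1)(p-1)-1}.
\]
Since $(\gamma+1)(p-1)=\frac{(n-1)(p-1)}{q-1}>n-1$ because $p>q$, we get $\Delta_pw\ge0$.
\end{itemize}
So $w$ is a subsolution. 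Choose $\varepsilon=mR_1^{\gamma}$. Then $w\le u$ on $|x|=R_1$, and $w=0\le u$ on $|x|=R_2$. Comparison gives $u\ge \varepsilon(|x|^{-\gamma}-R_2^{-\gamma})$ on the annulus. Letting $R_2\to\infty$ yields $u(x)\ge mR_1^{\gamma}|x|^{-\gamma}$ for $|x|\ge R_1$. This is the claim with $C=C(p,q,n,u,R)$.

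\textbf{Case $q\ge n$.} Use the $q$-harmonic radial functions that are decreasing and normalized to equal $\varepsilon$ at $R_1$ and $0$ at $R_2$:
\[
w=\varepsilon\,\frac{R_2^{\beta}-r^{\beta}}{R_2^{\beta}-R_1^{\beta}},\quad \beta=\frac{q-n}{q-1}>0\ \ (q>n),
\qquad
w=\varepsilon\,\frac{\log(R_2/r)}{\log(R_2/R_1)}\ \ (q=n).
\]
In both cases $\Delta_q w=0$ and $w'=-c\,r^{\beta-1}$ (with $\beta=0$ when $q=n$). Then
\[
r^{n-1}|w'|^{p-2}w'=-c^{p-1}r^{(n-1)-(1-\beta)(p-1)},
\]
and the exponent equals $(n-1)\big(1-\frac{p-1}{q-1}\big)<0$. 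So this quantity is increasing in $r$, i.e. $\Delta_pw\ge0$, and $w$ is again a subsolution. Take $\varepsilon=m$ and compare on the annulus as before. Letting $R_2\to\infty$, we have $w\to\varepsilon$ pointwise (since $\beta>0$, resp. $\log(R_2/r)/\log(R_2/R_1)\to1$). Hence $u\ge m$ on $|x|\ge R_1$, and in particular $\liminf_{|x|\to\infty}u(x)\ge m>0$.
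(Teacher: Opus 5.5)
The paper gives no proof of this lemma. It is quoted from \cite{BBF26} and used only as a black box, in the cutoff estimate for the case $\beta+2(m-1)<0$, where $u$ is a positive $C^{1,\alpha}_{loc}$ solution on all of $\mathbb{R}^n$. Your barrier-and-comparison argument is a correct, self-contained proof.

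The key point is that, because $p>q$, the decreasing radial $q$-harmonic functions are also $p$-subharmonic. These are $r^{-(n-q)/(q-1)}$ when $q<n$, and $-r^{(q-n)/(q-1)}$ or $-\log r$ when $q\ge n$. Being $p$-subharmonic, they are subsolutions of the full operator. Strict monotonicity of $\xi\mapsto|\xi|^{p-2}\xi+|\xi|^{q-2}\xi$ then gives comparison on annuli.

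Your exponent computations are right: $(\gamma+1)(p-1)=\frac{(n-1)(p-1)}{q-1}>n-1$, and $(n-1)\bigl(1-\frac{p-1}{q-1}\bigr)<0$. Compared with the bare citation, your route yields an explicit constant $C=mR_1^{(n-q)/(q-1)}$. It also makes visible that $p>q$ is exactly the structural input. That hypothesis is the paper's standing assumption, but it is not stated in the lemma as quoted.

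Three small points.

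First, you obtain the bound on $\{|x|\ge R_1\}$ for arbitrary $R_1>R$, not up to $|x|=R$. That is the correct reading of the statement. With your $\gamma=\frac{n-q}{q-1}$, the function $R^{-\gamma}-|x|^{-\gamma}$ is a positive supersolution in $\{|x|>R\}$ that vanishes on $|x|=R$, so the bound cannot hold up to the inner sphere. The large-$|x|$ version is all the paper uses.

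Second, the boundary comparison is cleanest if $m$ is the essential infimum of $u$ over a closed annular neighbourhood $N$ of $\{|x|=R_1\}$, rather than the infimum on the sphere itself.
\begin{itemize}
\item Since $w$ is decreasing, $w\le w(R_1)\le m\le u$ a.e.\ on $N\cap A$.
\item Near $|x|=R_2$, $w\to0$ while $u$ has a positive essential lower bound there.
\item Hence $(w-u)^+$ vanishes near $\partial A$, lies in $W^{1,p}_0(A)$, and is an admissible test function.
\end{itemize}
This also removes any need for strict inequality when you take $\varepsilon=m$ in the case $q\ge n$.

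Third, in this paper's setting the weak Harnack step is superfluous, because $u$ is continuous and positive, so $\min_{|x|=R_1}u>0$ trivially. For general weak supersolutions the step is genuinely needed. The weak Harnack inequality for supersolutions with generalized Orlicz growth, which includes the double-phase case, supplies it.
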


\section{Proof of the theorem \ref{Them1}}
In this section, we use the vector field method to prove that the equation admits no positive weak solutions under the given conditions, by constructing a differential identity, choosing suitable parameters, performing integral estimates with cutoff functions, and combining sign control and decay of the cutoff errors. Finally, the standard regularization argument for the quasi-linear equation finishes the proof of Theorem \ref{Them1} (see e.g. \cite{CM18,CFR20,Zhou24,YuZhou25}).

\noindent\textbf{Conventions:} the following calculations are carried out in $\O_r:=\{|\nabla u|>0\}\cap \rz^n$. By the standard regularity theorem, we know the weak solution $u$ to equation \eqref{pde1} is indeed smooth in $\O_r$ and belongs to $C_{loc}^{1,\alpha}(\rz^n)$ (see e.g. \cite{LU64,L91}). 

\subsection{Notation and Differential Identity}

Define the vector field associated with the $(p,q)$-Laplace by
\[
Y^{i}:=|\nabla u|^{p-2} u_{i}+|\nabla u|^{q-2} u_{i},
\]
and denote its Jacobian matrix by \(Y_{j}^{i}:=\partial_{j}Y^{i}\). Therefore $ \sum\limits_{i=1}^{n}Y^{i}_{i}=\Delta_{p}u+\Delta_{q}u=-u^{\alpha} $.
Set
\[
	s=|\nabla u|,\qquad x=s^p+s^q,\qquad z=(p-1)s^p+(q-1)s^q.
\]
The corrected proof uses the \(q\)-phase weighted energy
\begin{equation}\label{Xq-def}
	X_q(s):=s^q+\frac{q(p-1)}{p(q-1)}s^p.
\end{equation}
Then
\begin{equation}\label{Xq-key}
	\frac{sX_q'(s)}{(p-1)s^p+(q-1)s^q}=\frac{q}{q-1}.
\end{equation}
We shall also use
\[
	r:=\frac{X_q(s)}{x}\geq1,\qquad h_q:=\frac{q}{q-1}.
\]

\begin{lemma}
For any parameters \(\beta,\lambda,\mu\in\mathbb{R}\), the following differential identity holds:
	\begin{align}\label{eq-1}
		\begin{split}
				&\partial_i \left[ u^\beta Y_j^i Y^j - u^\beta Y_j^j Y^i + \lambda u^{\beta-1} X_qY^i + \mu u^{\beta+\alpha} Y^i \right] \\
			&= u^\beta Y_j^i Y_i^j+\beta u^{\beta-1}u_iY_j^iY^j-(1+\mu)u^{\beta+2\alpha}+\lambda(\beta-1)u^{\beta-2}X_qx\\
			&\quad+\lambda u^{\beta-1}(X_q)_iY^i-\lambda u^{\beta+\alpha-1}X_q+\big[\mu(\beta+\alpha)+\beta\big]u^{\beta+\alpha-1}x,
		\end{split}
	\end{align}
	where \( \beta, \lambda, \mu \) are undetermined parameters.
\end{lemma}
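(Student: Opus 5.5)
The identity \eqref{eq-1} is a pointwise computation, valid in \(\O_r=\{|\nabla u|>0\}\), where \(u\) is smooth. I would prove it by expanding the divergence on the left with the product rule, one bracketed term at a time, and then simplifying with three facts. First, the equation gives \(Y^i_i=-u^\alpha\). Second, by the definition of \(Y\),
\[
u_iY^i=|\nabla u|^{p}+|\nabla u|^{q}=x .
\]
Third, mixed partial derivatives of the smooth field \(Y\) commute in \(\O_r\), that is, \(\partial_i Y^i_j=\partial_j Y^i_i\).

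\textbf{Step 1: the two quadratic terms.} Differentiating gives
\[
\partial_i(u^\beta Y^i_jY^j)=\beta u^{\beta-1}u_iY^i_jY^j+u^\beta(\partial_iY^i_j)Y^j+u^\beta Y^i_jY^j_i .
\]
Similarly,
\[
\partial_i(u^\beta Y^j_jY^i)=\beta u^{\beta-1}u_iY^iY^j_j+u^\beta(\partial_iY^j_j)Y^i+u^\beta Y^j_jY^i_i .
\]
Relabelling indices and using \(\partial_iY^i_j=\partial_jY^i_i\), the two third-order terms \(u^\beta(\partial_iY^i_j)Y^j\) and \(u^\beta(\partial_iY^j_j)Y^i\) coincide, so they cancel in the difference. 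For the remaining terms, \((Y^j_j)^2=u^{2\alpha}\) and \(u_iY^iY^j_j=-xu^\alpha\). The difference therefore equals
\[
u^\beta Y^i_jY^j_i+\beta u^{\beta-1}u_iY^i_jY^j-u^{\beta+2\alpha}+\beta u^{\beta+\alpha-1}x .
\]

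\textbf{Step 2: the \(\lambda\)- and \(\mu\)-terms.} For the \(\lambda\)-term,
\[
\partial_i(\lambda u^{\beta-1}X_qY^i)=\lambda(\beta-1)u^{\beta-2}X_q\,u_iY^i+\lambda u^{\beta-1}(X_q)_iY^i+\lambda u^{\beta-1}X_qY^i_i .
\]
This becomes \(\lambda(\beta-1)u^{\beta-2}X_qx+\lambda u^{\beta-1}(X_q)_iY^i-\lambda u^{\beta+\alpha-1}X_q\). For the \(\mu\)-term,
\[
\partial_i(\mu u^{\beta+\alpha}Y^i)=\mu(\beta+\alpha)u^{\beta+\alpha-1}x-\mu u^{\beta+2\alpha}.
\]

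\textbf{Step 3: collect terms.} The \(u^{\beta+2\alpha}\) terms combine to \(-(1+\mu)u^{\beta+2\alpha}\). The \(u^{\beta+\alpha-1}x\) terms combine to \([\mu(\beta+\alpha)+\beta]u^{\beta+\alpha-1}x\). Everything else appears verbatim on the right of \eqref{eq-1}, which proves the identity in \(\O_r\).

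The only point needing care is the cancellation of the third-order terms. This uses that \(Y\) is \(C^2\), hence that \(\partial_iY^i_j=\partial_jY^i_i\), which holds in \(\O_r\) by the regularity convention. Outside \(\O_r\) the identity is not claimed; it is handled later by the regularization argument. The rest is bookkeeping of signs, in particular \(Y^i_i=-u^\alpha\), which produces the \(+\beta u^{\beta+\alpha-1}x\) and \(-\lambda u^{\beta+\alpha-1}X_q\) terms.
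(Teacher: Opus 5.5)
Your proof is correct and follows the same route as the paper. You expand the divergence by the product rule, use \(Y^i_i=-u^\alpha\) and \(u_iY^i=x\), and cancel the third-order terms via \(\partial_iY^i_j=\partial_jY^i_i\); the paper writes this last step as \((\partial_iY^i_j)Y^j-(\partial_iY^j_j)Y^i=-\alpha u^{\alpha-1}u_jY^j+\alpha u^{\alpha-1}u_iY^i=0\), which is the same cancellation, and your collection of the \(u^{\beta+2\alpha}\) and \(u^{\beta+\alpha-1}x\) coefficients matches the stated identity.
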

\begin{proof}
	Differentiate the vector field in \eqref{eq-1} and use \(Y_i^i=-u^\alpha\). The terms containing \(\partial_iY_j^i\) and \(\partial_iY_j^j\) cancel, since
	\[
		(\partial_iY_j^i)Y^j-(\partial_iY_j^j)Y^i
		=-\alpha u^{\alpha-1}u_jY^j+\alpha u^{\alpha-1}u_iY^i=0.
	\]
	This gives the displayed identity. In particular, the coefficient of \(u^{\beta+2\alpha}\) is \(-(1+\mu)\).
\end{proof}

To determine the sign of the first term on the right-hand side of equality \eqref{eq-1}, we perform the following analysis.
\begin{align}
	\begin{split}
		Y^{i}_{j}&=\partial_j\left(|\nabla u|^{p-2}u_i + |\nabla u|^{q-2}u_i\right)\\
		&=|\nabla u|^{p-2}\left(u_{ij} + (p-2)\frac{u_k u_{kj} u_i}{|\nabla u|^2}\right)
		+ |\nabla u|^{q-2}\left(u_{ij} + (q-2)\frac{u_k u_{kj} u_i}{|\nabla u|^2}\right) \\
		&:=A\cdot B,
	\end{split}
\end{align}
where
\[
	A=|\nabla u|^{p-2}\left(I+(p-2)\frac{\nabla u\otimes\nabla u}{|\nabla u|^2}\right)
	+|\nabla u|^{q-2}\left(I+(q-2)\frac{\nabla u\otimes\nabla u}{|\nabla u|^2}\right),
	\qquad B=\nabla^2u.
\]
The matrix \(A\) is positive definite. By Lemma \ref{lemma:3.5}, \begin{equation}\label{eq3.5}
    Y_j^iY_i^j=\operatorname{trace}((AB)^2)\geq c|D Y|^2\ge 0,
\end{equation}
where $c>0$ depends only on $n,p,q$.

At the calculation point, we rotate the coordinate system such that $ \nabla u=|\nabla u|e_{1} $, i.e., $u_{1}\neq 0 $ and $u_{i}=0  $ for $ i=2,\dots,n $.
Therefore

$(Y_j^i) = 
\begin{pmatrix}
	\big((p-1)|\nabla u|^{p-2} + (q-1)|\nabla u|^{q-2}\big)u_{11}, & 
	\big((p-1)|\nabla u|^{p-2} + (q-1)|\nabla u|^{q-2}\big)u_{1j} \\[6pt]
	\big(|\nabla u|^{p-2} + |\nabla u|^{q-2}\big)u_{i1}, & 
	\big(|\nabla u|^{p-2} + |\nabla u|^{q-2}\big)u_{ij}
\end{pmatrix},
$
\begin{align*}
    u_iY_j^iY^j=\bigl( |\nabla u|^p + |\nabla u|^q \bigr) \bigl( (p-1)|\nabla u|^{p-2} + (q-1)|\nabla u|^{q-2} \bigr) u_{11},
\end{align*}
and by (\ref{Xq-key})
\begin{align*}
    (X_q)_iY^i=\frac{q}{q-1}\bigl( |\nabla u|^p + |\nabla u|^q \bigr) \bigl( (p-1)|\nabla u|^{p-2} + (q-1)|\nabla u|^{q-2} \bigr) u_{11}.
\end{align*}
Substituting this into the identity \eqref{eq-1} simplifies it to:
	\begin{align}\label{eq-2}
	\begin{split}
		& \partial_i \bigl[ u^\beta Y_j^i Y^j - u^\beta Y_j^j Y^i + \lambda u^{\beta-1} X_q Y^i + \mu u^{\beta+\alpha} Y^{i} \bigr] \\
	=& u^\beta \bigl( (p-1)|\nabla u|^{p-2} + (q-1)|\nabla u|^{q-2} \bigr)^2 u_{11}^2 + u^\beta \bigl( |\nabla u|^{p-2} + |\nabla u|^{q-2} \bigr)^2 \sum_{i,j\ge2} u_{ij}^2  \\
	+& 2u^\beta \bigl( |\nabla u|^{p-2} + |\nabla u|^{q-2} \bigr) \bigl( (p-1)|\nabla u|^{p-2} + (q-1)|\nabla u|^{q-2} \bigr) \sum_{i\ge2} u_{1i}^2\\
	+& (h_q\lambda+\beta) u^{\beta-1} \bigl( |\nabla u|^p + |\nabla u|^q \bigr) \bigl( (p-1)|\nabla u|^{p-2} + (q-1)|\nabla u|^{q-2} \bigr) u_{11} \\
	-& (1+\mu) u^{\beta+2\alpha} 
	+ \lambda(\beta-1) u^{\beta-2} \bigl( |\nabla u|^p + |\nabla u|^q \bigr)\bigl( |\nabla u|^q + \frac{q(p-1)}{p(q-1)}|\nabla u|^p \bigr)
	\\
	+& \bigl( \mu(\beta+\alpha) + \beta \bigr) u^{\alpha+\beta-1} \bigl( |\nabla u|^p + |\nabla u|^q \bigr) -\lambda u^{\alpha+\beta-1} \bigl( |\nabla u|^q + \frac{q(p-1)}{p(q-1)}|\nabla u|^p \bigr).
	\end{split}
\end{align}

From the equation \(\Delta_{p}u+\Delta_{q}u=-u^{\alpha}\), direct expansion gives
\begin{align}\label{eq-3}
		\Delta_p u + \Delta_q u 
	= \bigl( |\nabla u|^{p-2} + |\nabla u|^{q-2} \bigr) \sum_{i=2}^{n} u_{ii} 
	+ \bigl( (p-1) |\nabla u|^{p-2} + (q-1) |\nabla u|^{q-2} \bigr) u_{11}.
\end{align}
Squaring both sides and dividing by $ n $:
\begin{align}
	\begin{split}\label{eq-4}
		\frac{1}{n} u^{2\alpha} 
	&= \frac{1}{n} \bigl( u^\alpha \bigr)^2 
	= \frac{1}{n} \bigl( \Delta_p u + \Delta_q u \bigr)^2 \\
	&= \frac{1}{n} \left( \bigl( |\nabla u|^{p-2} + |\nabla u|^{q-2} \bigr) \sum_{i=2} u_{ii} + \bigl( (p-1)|\nabla u|^{p-2} + (q-1)|\nabla u|^{q-2} \bigr) u_{11} \right)^2.
	\end{split}
\end{align}
For \(i,j\geq2\), we use the orthogonal decomposition:
\begin{align}\label{eq-5}
\begin{split}
   	\sum_{i,j \ge 2} u_{ij}^2 
	&= \sum_{i,j \ge 2} \left( u_{ij} - \frac{1}{n-1} \sum_{k \ge 2} u_{kk} \delta_{ij} + \frac{1}{n-1} \sum_{k \ge 2} u_{kk} \delta_{ij} \right)^2 \\
	&= \sum_{i,j \ge 2} \left( u_{ij} - \frac{1}{n-1} \sum_{k \ge 2} u_{kk} \delta_{ij} \right)^2 
	+ \frac{1}{n-1} \left( \sum_{k \ge 2} u_{kk} \right)^2. 
\end{split}
\end{align}

According to \eqref{eq-4} and \eqref{eq-5}, it follows that the first two terms on the right-hand side of \eqref{eq-2} can be transformed into
	\begin{align}\label{eq-6}
	\begin{split}
		& u^\beta \bigl( (p-1)|\nabla u|^{p-2} + (q-1)|\nabla u|^{q-2} \bigr)^2 u_{11}^2 + u^\beta \bigl( |\nabla u|^{p-2} + |\nabla u|^{q-2} \bigr)^2 \sum_{i,j\ge2} u_{ij}^2  \\
		&= \frac{1}{n} u^{\beta+2\alpha}
		+ \frac{n-1}{n} u^\beta \bigl( (p-1)|\nabla u|^{p-2} + (q-1)|\nabla u|^{q-2} \bigr)^2 u_{11}^2 \\
		&\quad + \frac{1}{n(n-1)} u^\beta \bigl( |\nabla u|^{p-2} + |\nabla u|^{q-2} \bigr)^2 \left( \sum_{k \ge 2} u_{kk} \right)^2 \\
		&\quad - \frac{2}{n} u^\beta \bigl( |\nabla u|^{p-2} + |\nabla u|^{q-2} \bigr) \sum_{i \ge 2} u_{ii}
		\cdot \bigl( (p-1)|\nabla u|^{p-2} + (q-1)|\nabla u|^{q-2} \bigr) u_{11} \\
		&\quad + u^\beta \bigl( |\nabla u|^{p-2} + |\nabla u|^{q-2} \bigr)^2 \sum_{i,j \ge 2} \left( u_{ij} - \frac{1}{n-1} \sum_{k \ge 2} u_{kk} \delta_{ij} \right)^2\\
		&= \frac{n-1}{n} u^\beta \Bigl[
		\bigl( (p-1)|\nabla u|^{p-2} + (q-1)|\nabla u|^{q-2} \bigr) u_{11}
		- \frac{1}{n-1} \bigl( |\nabla u|^{p-2} + |\nabla u|^{q-2} \bigr) \sum_{k \ge 2} u_{kk}
		\Bigr]^2 \\
		& \quad +\frac{1}{n} u^{\beta+2\alpha}
		 + u^\beta \bigl( |\nabla u|^{p-2} + |\nabla u|^{q-2} \bigr)^2
		\sum_{i,j \ge 2} \left( u_{ij} - \frac{1}{n-1} \sum_{k \ge 2} u_{kk} \delta_{ij} \right)^2.
	\end{split}
\end{align}

By applying \eqref{eq-6} again in \eqref{eq-4} and then substituting \eqref{eq-2}, we further reduce the identity \eqref{eq-1} to
\begin{align}\label{eq-7}
	\begin{split}
		&\partial_i \Bigl[ u^\beta Y_j^i Y^j - u^\beta Y_j^j Y^i + \lambda u^{\beta-1} X_q Y^i + \mu u^{\beta+\alpha} Y^{i} \Bigr]\\
	&= \frac{n-1}{n} u^\beta \left[ \frac{n}{n-1} \bigl( (p-1)|\nabla u|^{p-2} + (q-1)|\nabla u|^{q-2} \bigr) u_{11} + \frac{1}{n-1} u^\alpha \right]^2 \\
	& + (h_q\lambda+\beta) u^{\beta-1} \bigl( |\nabla u|^p + |\nabla u|^q \bigr) \bigl( (p-1)|\nabla u|^{p-2} + (q-1)|\nabla u|^{q-2} \bigr) u_{11} \\
	&- \left(1+\mu - \frac{1}{n}\right) u^{\beta+2\alpha}  + \bigl( \mu(\beta+\alpha) + \beta  \bigr) u^{\alpha+\beta-1} \bigl( |\nabla u|^p + |\nabla u|^q \bigr)  \\
	& + \lambda (\beta-1) u^{\beta-2} \bigl( |\nabla u|^p + |\nabla u|^q \bigr)\bigl( |\nabla u|^q + \frac{q(p-1)}{p(q-1)}|\nabla u|^p \bigr) \\
    &-\lambda u^{\alpha+\beta-1}\bigl( |\nabla u|^q + \frac{q(p-1)}{p(q-1)}|\nabla u|^p \bigr)\\
	& + u^\beta \bigl( |\nabla u|^{p-2} + |\nabla u|^{q-2} \bigr)^2 \sum_{i,j \ge 2} \left( u_{ij} - \frac{1}{n-1} \sum_{k \ge 2} u_{kk} \delta_{ij} \right)^2 \\
	& + 2u^\beta \bigl( |\nabla u|^{p-2} + |\nabla u|^{q-2} \bigr) \bigl( (p-1)|\nabla u|^{p-2} + (q-1)|\nabla u|^{q-2} \bigr) \sum_{i \ge 2} u_{1i}^2. 
	\end{split}
\end{align}

\subsection{Sign Control and Integral Estimates}

Let the first six terms on the right-hand side of \eqref{eq-7} be denoted by $B$. We set $y = \bigl[(p-1)|\nabla u|^{p-2} + (q-1)|\nabla u|^{q-2}\bigr] u_{11} + \frac{1}{n} u^\alpha$.

Recall that for $s=|\nabla u|$, we define
$ x = s^p + s^q,\,
z = (p-1)s^p + (q-1)s^q$, $X_q(s)=s^q+\frac{q(p-1)}{p(q-1)}s^p$, $r=\frac{X_q(s)}{x}\geq1,\, h_q=\frac{q}{q-1}$, then we have
\begin{align}\label{eq3.12}
	\begin{split}
	u^{-\beta} B
	&= \frac{n}{n-1} y^2 + (\beta+h_q\lambda) \left( y - \frac{1}{n} u^\alpha \right) \frac{x}{u} - \left( \mu + \frac{n-1}{n} \right) u^{2\alpha} \\
	& \quad + \bigl( \mu(\beta+\alpha) + \beta \bigr) u^{\alpha-1} x + \lambda(\beta-1)r \frac{x^2}{u^2}-\lambda r u^{\alpha-1} x\\
	&= \frac{n}{n-1} \left\{ y + \frac{n-1}{2n} \left(\beta+h_q\lambda\right) \frac{x}{u} \right\}^2 - \frac{n-1}{4n} \left( \beta+h_q\lambda \right)^2 \frac{x^2}{u^2} \\
	& + \lambda(\beta-1)r \frac{x^2}{u^2} - ( \mu + \frac{n-1}{n} ) u^{2\alpha} + \left[ \mu(\beta+\alpha) + \frac{n-1}{n} \beta - \frac{h_q}{n} \lambda - \lambda r \right] u^{\alpha-1} x \\
	&\geq Q_q\frac{x^2}{u^2} - \left( \mu + \frac{n-1}{n} \right) u^{2\alpha} + A_q u^{\alpha-1} x,
	\end{split}
\end{align}
where \begin{equation}\label{corrected-QA}
	Q_q=\lambda(\beta-1)r-\frac{n-1}{4n}(\beta+\lambda h_q)^2,
	\qquad
	A_q=\mu(\beta+\alpha)+\frac{n-1}{n}\beta-\lambda\left(r+\frac{h_q}{n}\right).
\end{equation}

Denote $$E^i_j=Y^i_j+\frac{1}{n}u^\alpha\delta_{ij}+ \frac{n-1}{2n} \left(\beta+h_q\lambda\right) \frac{|\nabla u|^{p-2} + |\nabla u|^{q-2}}{u}u_i u_j.$$ Then, \eqref{eq3.12} and \eqref{eq-7} implies
\begin{align}\label{corrected-sign}
	&\partial_i \left[ u^\beta Y_j^iY^j-u^\beta Y_j^jY^i+\lambda u^{\beta-1}X_qY^i+\mu u^{\beta+\alpha}Y^i \right]\non\\
	\geq& Q_q u^{\beta-2}x^2-\left(\mu+\frac{n-1}{n}\right)u^{\beta+2\alpha}
	 +A_q u^{\beta+\alpha-1}x+c_0 u^\beta |E|^2,
\end{align}
where \(c_0>0\) depends only on \(p,q,n\).

Choose
\[
	\lambda=-L,\qquad
	\mu=-\frac{n-1}{n}-\varepsilon_0,
\]
where \(L>0\) and \(0<\varepsilon_0\ll1\).
Since \(r\geq1\),
\[
	A_q\geq -\frac{n-1}{n}\alpha+L\left(1+\frac{h_q}{n}\right)-\varepsilon_0(\beta+\alpha),
\]
so \(A_q>0\), for sufficiently small \(\varepsilon_0\), if
\[
	L>\frac{(n-1)\alpha}{n+h_q}.
\]
Moreover, if $\beta\le 1$, then 
\[
	Q_q\geq L(1-\beta)-\frac{n-1}{4n}\left(\beta-h_q L\right)^2.
\]
Note that $Q_q$ is a quadratic polynomial in $\beta$, whose axis is $\beta=(h_q-\frac{2n}{n-1})L$. If 
we choose $\beta=(h_q-\frac{2n}{n-1})L$,
then \(Q_q\ge L\left[1-L\left(\frac{q}{q-1}-\frac{n}{n-1}\right)\right]>0\) if
\[
	L<\frac{(q-1)(n-1)}{n-q}.
\]
Note that when $L\in(\frac{(n-1)\alpha}{n+h_q},\frac{(q-1)(n-1)}{n-q})$, we have \begin{align*}
    &\beta\le \frac{(q-1)(n-1)}{n-q}(\frac{q}{q-1}-\frac{2 n}{n-1})=\frac{2 n-q n-q}{n-q}<1\quad\textit{if}\,q\in (1,\frac{2n}{n+1}],\\
    &\beta=(\frac{q}{q-1}-\frac{2 n}{n-1})L\le 0\quad\textit{if}\,q\in (\frac{2n}{n+1},n).
\end{align*}
Therefore, we can choose $\beta=(h_q-\frac{2n}{n-1})L$ and
these two requirements on $L$ are compatible exactly under
\[
	\alpha<\frac{q-1}{n-q}\left(n+\frac{q}{q-1}\right)
	=\frac{n(q-1)+q}{n-q}=q^*-1.
\]
Under the assumptions of Theorem \ref{Them1}, we may therefore choose \(L\) and then \(\varepsilon_0\) so that \(Q_q>0\), \(A_q>0\), and \(-(\mu+\frac{n-1}{n})>0\). Hence by \eqref{corrected-sign}, there exists \(\delta>0\) such that in $\O_r$
\begin{align}\label{ineq-id-new}
	\delta u^\beta\left\{|D Y|^2+u^{2\alpha}+\frac{x^2}{u^2}\right\}
	\leq
	\partial_i\left\{u^\beta Y_j^iY^j-u^\beta Y_j^jY^i+\lambda u^{\beta-1}X_qY^i+\mu u^{\beta+\alpha}Y^i\right\}.
\end{align}

The standard regularization argument for the p-Laplacian type equations implies that the inequality \eqref{ineq-id-new} holds globally in weak sense (see e.g. \cite{CM18,CFR20,Zhou24,YuZhou25}).

Let \(\eta\in C_c^\infty(B_{2R})\) satisfy \(\eta\equiv1\) in \(B_R\), \(0\leqslant\eta\leqslant1\), and \(|\nabla\eta|\leqslant C/R\). Testing \eqref{ineq-id-new} with \(\eta^\gamma\), where \(\gamma>2p\) is fixed, yields
\begin{align}\label{ineq-0-new}
	\begin{split}
		&\delta\int u^\beta\left\{|D Y|^2+u^{2\alpha}+\frac{x^2}{u^2}\right\}\eta^\gamma\,dx\\
		&\leq C\int u^\beta |DY|\,|Y|\,|\nabla\eta|\eta^{\gamma-1}\,dx
		+C\int u^{\beta-1}X_q|Y|\,|\nabla\eta|\eta^{\gamma-1}\,dx
		+C\int u^{\beta+\alpha}|Y|\,|\nabla\eta|\eta^{\gamma-1}\,dx.
	\end{split}
\end{align}
By Cauchy's inequality and \(X_q\asymp x\),
\begin{align}
	C\int u^\beta |DY|\,|Y|\,|\nabla\eta|\eta^{\gamma-1}\,dx
	&\leq \varepsilon\int u^\beta |D Y|^2\eta^\gamma\,dx
	+C\int u^\beta|Y|^2|\nabla\eta|^2\eta^{\gamma-2}\,dx,\\
	C\int u^{\beta-1}X_q|Y|\,|\nabla\eta|\eta^{\gamma-1}\,dx
	&\leq \varepsilon\int u^\beta\frac{x^2}{u^2}\eta^\gamma\,dx
	+C\int u^\beta|Y|^2|\nabla\eta|^2\eta^{\gamma-2}\,dx,\\
	C\int u^{\beta+\alpha}|Y|\,|\nabla\eta|\eta^{\gamma-1}\,dx
	&\leq \varepsilon\int u^{\beta+2\alpha}\eta^\gamma\,dx
	+C\int u^\beta|Y|^2|\nabla\eta|^2\eta^{\gamma-2}\,dx.
\end{align}
Taking \(\varepsilon>0\) small gives
\begin{align}\label{ineq-5-new}
	\int u^\beta\left\{|D Y|^2+u^{2\alpha}+\frac{x^2}{u^2}\right\}\eta^\gamma\,dx
	\leq C\int u^\beta|Y|^2|\nabla\eta|^2\eta^{\gamma-2}\,dx.
\end{align}

Note that \(|Y|^2\leq C(s^{2(p-1)}+s^{2(q-1)})\). Young's inequality gives, if $\beta+2(m-1)\ge 0$, then
\[
	R^{-2}\int_{B_{R}^c} u^\beta s^{2(m-1)}\eta^{\gamma-2}\,dx
	\leq \varepsilon\int u^\beta\left(\frac{s^{2m}}{u^2}+u^{2\alpha}\right)\eta^\gamma\,dx
	+CR^{\,n-\frac{m(\beta+2\alpha)}{\alpha-(m-1)}}.
\]
On the other hand, if $\beta+2(m-1)< 0$, then by Lemma \ref{lemma1}
\begin{align*}
	R^{-2}\int_{B_{R}^c} u^\beta s^{2(m-1)}\eta^{\gamma-2}\,dx
	&\leq \varepsilon\int u^{\beta-2}s^{2m}\eta^\gamma+CR^{-2m}\int_{B_{R}^c} u^{\beta+2(m-1)}\eta^{\gamma-2m}\,dx\\
&\le \varepsilon\int u^{\beta-2}s^{2m}\eta^\gamma+CR^{n-\frac{n-q}{q-1}(\beta+2m-2)-2m}
\end{align*}

\noindent
\textbf{Case 1:} $1<q\le\frac{2n}{n+1}$. Then we have $\beta+2(q-1)> 0$, since $\beta=(\frac{q}{q-1}-\frac{2n}{n-1})L\ge 0$.

Using estimates for \(m=p,q\) in \eqref{ineq-5-new}, and absorbing once more, we obtain
\begin{align}\label{final-estimate1}
	&\int u^\beta\left\{|D Y|^2+u^{2\alpha}+\frac{x^2}{u^2}\right\}\eta^\gamma\,dx
	\leq CR^{E_q}+CR^{E_p},\\
	&E_m=n-\frac{m(\beta+2\alpha)}{\alpha-(m-1)}.\non
\end{align}

\noindent
\textbf{Case 2:} $q>\frac{2n}{n+1}$.
Using estimates for \(m=p,q\) in \eqref{ineq-5-new}, and absorbing once more, we obtain
\begin{align}\label{final-estimate2}
	&\int u^\beta\left\{|D Y|^2+u^{2\alpha}+\frac{x^2}{u^2}\right\}\eta^\gamma\,dx
	\leq CR^{E_q}+CR^{E_p},\\
	&E_m=\max\{n-\frac{m(\beta+2\alpha)}{\alpha-(m-1)},n-\frac{n-q}{q-1}(\beta+2m-2)-2m\}.\non
\end{align}
It remains to verify that these exponents are negative. Put
\[
	L_0=\frac{(n-1)\alpha}{n+h_q}.
\]
At \(L=L_0\),
\[
	\beta+2\alpha=\frac{q(n+1)}{n(q-1)+q}\,\alpha,
\]
and therefore
\begin{align*}
    n-\frac{q(\beta+2\alpha)}{\alpha-(q-1)}
	=n-&\frac{q^2(n+1)\alpha}{(n(q-1)+q)(\alpha-q+1)}<0\,\,\textit{if} \,\alpha<q^*-1.\\
n-\frac{n-q}{q-1}(\beta+2q-2)-2q
=& -n - \frac{n-q}{q-1}\beta=-n - \frac{n-q}{q-1} \cdot \frac{-q(n+1)+2n}{n(q-1)+q}\alpha\\\
<&\frac{q-n}{q-1}<0\quad\textit{if}\,\,q> \frac{2n}{n+1},\alpha<q^*-1.
\end{align*}
 By continuity, choose \(L>L_0\) sufficiently close to \(L_0\) so that \(E_q<0\). Since \(p>q\) and \(\alpha>p-1\), the function
\[
	m\mapsto \frac{m}{\alpha-(m-1)}
\]
is increasing for \(m<\alpha+1\), and hence $E_m$ is decreasing for \(m<\alpha+1\). Therefore, we have $E_p<E_q<0$.

Letting \(R\to\infty\) in \eqref{final-estimate1} or \eqref{final-estimate2}, the right-hand side tends to zero. Since the integrand on the left is nonnegative and \(u>0\), this forces \(u^{\beta+2\alpha}\equiv0\), a contradiction. Theorem \ref{Them1} follows.

\section*{Acknowledgments}
The authors would like to express sincere gratitude to their supervisor, Professor Xi-Nan Ma, for his constant encouragement and guidance in this research. Yang Zhou is supported by National Natural Science Foundation of China [grant number 12141105]. Hua Zhu is supported by the National Natural Science Foundation of China (Grant No. 12501273) and the Research Foundation of Southwest University of Science and Technology (Grant No. 25zx7153).

\end{document}